\documentclass[reqno]{amsart}

\usepackage{tikz-cd}
\usepackage[margin=1.3in]{geometry}  
\usepackage{graphicx}              
\usepackage{amsmath}               
\usepackage{amsfonts}              
\usepackage{amsthm}                
\usepackage{enumitem}
\usepackage{amssymb}
\usepackage{hyperref}
\usepackage{tabularx}
\usepackage{amsmath}
\usepackage{mathtools}

\usepackage[mathscr]{euscript}
\usepackage[utf8]{inputenc}
\usepackage[english]{babel}
\usepackage{multicol}

\usepackage{tikz}
\usetikzlibrary{arrows}
\usetikzlibrary{matrix}

\tikzset{diagram/.style={matrix of math nodes, inner sep=0pt, row
    sep=#1, column sep=2.5em, text height=1.5ex, text depth=.25ex,
    nodes={inner sep=1ex}}}
\tikzset{diagram/.default=2.5em}
\newcommand\diagram{\path node[diagram]}

\newtheorem{thm}{Theorem}

\newtheorem{lemma}[thm]{Lemma}
\newtheorem{cor}[thm]{Corollary}
\newtheorem{conj}{Conjecture}
\theoremstyle{definition}
\newtheorem{defn}{Definition}

\newtheorem{rmk}{Remark}

\newtheorem*{related works}{Related Works}

\newtheorem*{question*}{Question}

\theoremstyle{definition}
\newtheorem{question}{Question}

\numberwithin{equation}{section}

\newcommand{\cf}{C_f}

\newcommand{\x}{\mathcal{X}}

\newcommand{\pgl}{\mathrm{PGL}_3(\CC)} 
\newcommand{\ZZ}{\mathbb{Z}} 
\newcommand{\RR}{\mathbb{R}} 
\newcommand{\CC}{\mathbb{C}} 
\newcommand{\PP}{\mathbb{C}P} 
\newcommand{\pn}{\mathrm{PConf}^{n}\cf}
\newcommand{\cn}{\mathrm{UConf}^{n}\cf}

\newcommand{\gf}{\Gamma_f} 
\newcommand{\pt}{\tilde{\psi}} 
\newcommand{\ft}{\tilde{\phi}} 
\newcommand{\gt}{\tilde{\gamma}} 
 
\newcommand{\s}{\sigma}

\newcommand{\ld}{\lambda}

\def\mul#1#2{\ensuremath{\left(\kern-.3em\left(\genfrac{}{}{0pt}{}{#1}{#2}\right)\kern-.3em\right)}}

\begin{document}

\title{Obstructions to choosing distinct points on cubic plane curves}

\author{Weiyan Chen}

\begin{abstract}
Every smooth cubic plane curve has 9 inflection points, 27 sextatic points, and 72 ``points of type nine". Motivated by these classical algebro-geometric constructions, we study the following topological question: Is it possible to continuously choose $n$ distinct unordered points on each smooth cubic plane curve for a natural number $n$? This question is equivalent to asking if certain fiber bundle admits a continuous section or not. We prove that the answer is no when $n$ is not a multiple of $9$. Our result resolves a conjecture of Benson Farb.
\end{abstract}

\maketitle



\section{Introduction}

A cubic plane curve in $\CC{{P}}^2$ is the zero locus of a homogeneous  polynomial $f(x,y,z)$ of degree 3. It has been a classical topic to study certain special points on smooth cubic curves, such as the 9 inflection points (dating back at least to Maclaurin; see the Introduction in \cite{d} for an account of the history of this topic), the 27 sextatic points (studied by  Cayley \cite{cayley}), and the 72 ``points of type 9" (studied by  Gattazzo \cite{g}). 
 Inspired by these classical constructions, Benson Farb  (private communication) asked the following question: For what integer $n$ is it possible to continuously choose $n$ distinct points on each cubic plane curve as the curve varies in families?

To make the question precise, let $\x$ denote the parameter space of smooth cubic plane curves:
$$\x:=\{f(x,y,z)\ :\ f \text{ is a homogeneous polynomial of degree 3 and is smooth}\}/\sim$$
where $f\sim \ld f$ for any $\ld\in\CC^\times$. There is a fiber bundle whose fiber over $f\in\x$ is  $\cn$, the configuration space of $n$ distinct unordered points on the cubic plane curve $\cf$ defined by $f=0$. 
\begin{equation}\label{bundle}
\begin{tikzpicture}
\diagram (m)
{\cn & E_n\\
& \x\\};
\path [->] (m-1-1) edge node [above] {} (m-1-2)
      (m-1-2) edge node [right] {$\xi_n$} (m-2-2);
\end{tikzpicture}
\end{equation}
\begin{question}[Farb]
\label{que}
For which values $n$ do the bundles $\xi_n$ admit continuous sections?
\end{question}

The aforementioned algebraic constructions and their generalizations give continuous sections to $\xi_n$ for various $n$:
\begin{thm}[Maclaurin, Cayley, Gattazzo]
\label{previous}
The bundle $\xi_n$ has a continuous section when $n=9\sum_{k\in S} J_2(k)$ where $J_2$ is  Jordan's 2-totient function and $S$ is an arbitrary finite set of positive integers, for example, when $n=9, 27, 36, 72, 81, 99, 108, 117, 135, 144, 180...$

\end{thm}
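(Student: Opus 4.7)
The plan is to exhibit, for each positive integer $k$, a canonical unordered subset $\Sigma_k(f) \subset \cf$ of cardinality $9 J_2(k)$ that varies continuously in $f$, and then form their disjoint union over $k \in S$. The key ingredient is the intrinsic morphism
$$\alpha_f \colon \cf \longrightarrow \mathrm{Jac}(\cf), \qquad P \longmapsto \bigl[\, 3P - \mathcal{O}_{\PP^2}(1)|_{\cf} \,\bigr] \in \mathrm{Pic}^0(\cf),$$
which is well-defined without any choice of base point on $\cf$ and varies algebraically in $f$. If we choose any inflection point $P_0 \in \cf$ as origin to identify $\cf$ with $\mathrm{Jac}(\cf)$, the defining property $3[P_0] = [\mathcal{O}_{\PP^2}(1)|_{\cf}]$ of inflection points shows that $\alpha_f$ corresponds to multiplication by $3$ on the resulting elliptic curve; in particular $\alpha_f$ is an isogeny of degree $9$ whose kernel is exactly the set of $9$ inflection points.

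For each $k \geq 1$, set
$$\Sigma_k(f) := \alpha_f^{-1}\bigl(\bigl\{\, Q \in \mathrm{Jac}(\cf) : Q \text{ has exact order } k \,\bigr\}\bigr).$$
Since $\alpha_f$ has degree $9$ and the group $\mathrm{Jac}(\cf)[k] \cong (\ZZ/k\ZZ)^2$ contains $J_2(k)$ elements of exact order $k$, one has $|\Sigma_k(f)| = 9 J_2(k)$. The cases $k = 1, 2, 3$ recover the $9$ inflection points, the $27$ sextatic points, and the $72$ points of type $9$ respectively, so this construction is a uniform generalization of the three classical constructions cited in the theorem. Moreover, the sets $\Sigma_k(f)$ for different values of $k$ are automatically pairwise disjoint, because $\alpha_f(P)$ has a unique exact order in $\mathrm{Jac}(\cf)$.

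Given a finite set $S$ of positive integers, the disjoint union $\bigsqcup_{k \in S} \Sigma_k(f)$ is therefore a canonical unordered configuration of $n = 9 \sum_{k \in S} J_2(k)$ distinct points on $\cf$, and the assignment $f \mapsto \bigsqcup_{k \in S} \Sigma_k(f)$ provides the desired section of $\xi_n$. The only step requiring care is continuity in $f$: the universal Jacobian over $\x$ is a smooth family of elliptic curves, inside which the locus of elements of exact order $k$ forms a finite étale cover of $\x$; pulling back via the algebraic family of isogenies $\alpha_f$ shows that each $\Sigma_k(f)$ varies continuously as an unordered configuration, since any monodromy of the cover merely permutes points within $\Sigma_k(f)$. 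This is essentially formal, making continuity the main (and only mild) obstacle in the proof.
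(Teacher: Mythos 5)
Your construction is correct, and it ends up selecting exactly the same points as the paper does — your $\Sigma_k(f)$ is precisely Gattazzo's set of ``points of type $3k$'' — but you arrive at them by a genuinely different route. The paper proceeds by citation: it takes the intersection-theoretic definitions (inflection points, Cayley's sextatic points, Gattazzo's points of type $3k$ as points where an irreducible degree-$k$ curve meets $\cf$ with multiplicity $3k$), and then invokes a classical fact of Clebsch to identify these with the $3k$-torsion points that are not $3j$-torsion for $j<k$, relative to an inflection point as origin; the count $9J_2(k)$ and the continuity statement then follow. You instead bypass the intersection theory entirely and build the sets directly as $\alpha_f^{-1}(\{\text{exact order }k\})$ for the canonical degree-$9$ map $\alpha_f(P)=[3P-\mathcal{O}_{\PP^2}(1)|_{\cf}]$ into $\mathrm{Pic}^0(\cf)$. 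What your approach buys is self-containedness and, more importantly, an explicit resolution of a point the paper leaves implicit: the torsion description appears to depend on a choice of inflection point as origin, which (by the very theorem being proved in this paper) cannot be made continuously over $\x$; your base-point-free formulation of $\alpha_f$ makes it manifest that no such choice is needed and that the whole construction is canonical, which is exactly what the continuity/monodromy argument at the end requires. What the paper's approach buys is the historical attribution and the geometric description of these points via tangent lines, conics, and cubics, which is the motivation for the whole problem. The only caveat on your write-up is that the claim that $\Sigma_2(f)$ and $\Sigma_3(f)$ coincide with the classical sextatic points and points of type nine is itself the content of the cited Clebsch/Cayley/Gattazzo results; but since the theorem only asserts the existence of \emph{some} continuous section of the stated cardinality, your proof does not actually depend on that identification.
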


See Section \ref{set up} for more discussion on Theorem \ref{previous} and a definition of  Jordan's 2-totient function. 

In contrast to the various algebraic constructions of sections of $\xi_n$, it was previously unknown whether there is any $n$ such that $\xi_n$ does not have any continuous section. 
\begin{conj}[Farb]
\label{conj}
$n=9$ is the smallest value for $\xi_n$ to have a continuous section.
\end{conj}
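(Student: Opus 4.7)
The plan is to restrict $\xi_n$ to a single $\pgl$-orbit inside $\x$ and extract an algebraic obstruction to the existence of sections. Fix a smooth cubic $f_0 \in \x$ whose stabilizer $H := \mathrm{Stab}_\pgl(f_0)$ is generic, so that $H \cong (\ZZ/3)^2 \rtimes \ZZ/2$ has order $18$ and contains the subgroup $T \cong (\ZZ/3)^2$ acting on $C_{f_0}$ by translations through the $3$-torsion (inflection) points. The orbit $Y := \pgl \cdot f_0$ is homeomorphic to $\pgl/H$, and $\xi_n|_Y$ is identified with the associated bundle $\pgl \times_H \mathrm{UConf}^n(C_{f_0})$. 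Consequently sections of $\xi_n|_Y$ correspond bijectively to $H$-equivariant continuous maps $\phi \colon \pgl \to \mathrm{UConf}^n(C_{f_0})$, and since any section of $\xi_n$ restricts to one over $Y$, it suffices to rule out such $\phi$ when $9 \nmid n$.

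The next step is to convert the existence of $\phi$ into a group-theoretic splitting problem. The surface braid group $\pi_1(\mathrm{UConf}^n(C_{f_0}))$ is torsion-free (a standard fact for closed surfaces of positive genus), while $\pi_1(\pgl) \cong \ZZ/3$, so $\phi$ lifts to universal covers and the induced map on $\pi_1$ is trivial. Tracking the equivariance through this lift shows that $\phi$ produces a splitting of the group extension
\[ 1 \to \pi_1(\mathrm{UConf}^n(C_{f_0}), S) \to \Gamma \to H \to 1, \]
where $S := \phi(e)$ is the basepoint configuration and $\Gamma$ is the group of lifts of elements of $H$ acting on the universal cover of the fiber.

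The heart of the argument, and what I expect to be the main technical hurdle, is to show that this extension fails to split whenever $9 \nmid n$. The canonical lifts of the translation subgroup $T \subset H$ (coming from the isotopies $t \mapsto \tau_{tv}$ from $\mathrm{id}$ to each $\tau_v$) satisfy $(\widetilde{\tau_v})^3 = \gamma_v$, where $\gamma_v \in \pi_1(\mathrm{UConf}^n(C_{f_0}), S)$ is the element that simultaneously translates all $n$ strands by $3\tilde v \in \Lambda$ in the universal cover $\RR^2$ of $C_{f_0}$. A splitting requires modifying these canonical lifts by elements of $\pi_1$ so that their cubes become trivial; a direct computation in the torus surface braid group $B_n(T^2)$ shows this is possible only when the basepoint $S$ is $T$-invariant, i.e., a union of $T$-orbits. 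Since $T$ acts freely on $C_{f_0}$ with orbits of size $9$, this forces $9 \mid n$. Combining the resulting non-existence for $n \in \{1, \dots, 8\}$ with Theorem~\ref{previous} (which provides the inflection-point section at $n = 9$) completes the proof of the conjecture.
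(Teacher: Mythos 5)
Your setup matches the paper's: restricting to the $\pgl$-orbit of a fixed cubic, identifying sections over that orbit with $H$-equivariant maps $\pgl \to \mathrm{UConf}^n(C_{f_0})$ (the paper's $\phi(g) = g\cdot s(g^{-1}\cdot f)$), lifting $\phi$ by playing the torsion-freeness of $\pi_1$ of the configuration space against $\pi_1(\pgl)\cong\ZZ/3\ZZ$, and locating the obstruction in the translation subgroup $T\cong(\ZZ/3\ZZ)^2$. (Your choice of a generic cubic with $|H|=18$ versus the paper's Fermat cubic is immaterial; both arguments use only $T$.)

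The gap is in the step you yourself flag as the main hurdle. First, the criterion you state --- that the cubes $(\alpha_v\widetilde{\tau_v})^{3}$ can be made trivial ``only when the basepoint $S$ is $T$-invariant'' --- cannot be right as formulated: $\mathrm{UConf}^n(C_{f_0})$ is connected, so the isomorphism class of the extension $1\to\pi_1\to\Gamma\to H\to 1$ does not depend on the basepoint, and $T$-invariance of $S$ is not a homotopy-invariant condition. What you actually need is the homotopy-invariant statement that the extension fails to split whenever $9\nmid n$, and the ``direct computation in $B_n(T^2)$'' that is supposed to deliver this is precisely the content that is missing. It is not routine: the obvious move of abelianizing the norm equation $\alpha_v\cdot{}^{\tau_v}\alpha_v\cdot{}^{\tau_v^{2}}\alpha_v=\gamma_v^{-1}$ via the total winding map to $H_1(C_{f_0})\cong\ZZ^2$ only yields $3\mid n$, not $9\mid n$, so one must extract genuinely finer information from the torus braid group using both generators of $T$ at once. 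The paper sidesteps the braid group entirely: from the failure of the lift $\ft:\pgl\to\pn$ to be equivariant it extracts a permutation representation $\rho:\gf\to S_n$ (Lemma \ref{fail}), observes that any index fixed by $\rho(\gamma)$ produces a $\gamma$-equivariant map $\pgl\to\cf$ into the curve itself (Lemma \ref{fixed}), and rules out such a map by an elementary covering-space argument on the torus (Lemma \ref{torus}); freeness of the $K$-action on $\{1,\dots,n\}$ then forces $9\mid n$. To complete your version you should either carry out the two-parameter computation in $B_n(T^2)$ in full, or interpose the projection-to-coordinates device as the paper does.
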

Behind this conjecture is the following speculation: the classical algebraic constructions should be the only possible continuous sections. The minimal number comes from the 9 inflection points. After Conjecture \ref{conj}, Farb made a much stronger conjecture: the only continuous sections of $\xi_n$ (up to homotopy) are those given by the algebraic constructions in Theorem \ref{previous}.

We will prove Conjecture \ref{conj} in this paper. In fact, we will prove the following stronger statement.
\begin{thm}
\label{no section}
$\xi_n$ has no continuous section unless $n$ is a multiple of 9.
\end{thm}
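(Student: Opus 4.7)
The plan is to identify, inside the monodromy group $\pi_1(\x, f)$, a subgroup that acts on the fiber $\cf$ by the translations $T_q \colon x \mapsto x + q$ for $q \in \cf[3] \cong (\mathbb{F}_3)^2$. Since $\cf[3]$ acts freely on the smooth genus-one curve $\cf$, any monodromy-invariant finite subset of $\cf$ --- such as $s(f)$ for a continuous section $s$ of $\xi_n$ --- must have cardinality divisible by $9$.

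To construct the desired loops, I would use the natural action of $\pgl$ on $\x$. For every smooth cubic $\cf$, the $3$-torsion translations $\cf[3] \subset \mathrm{Aut}(\cf)$ are actually realized inside $\pgl$: since the hyperplane class $\mathcal{O}_{\cf}(1)$ has degree $3$, the translations by $3$-torsion preserve it and hence extend to linear automorphisms of $\PP^2$ preserving $\cf$. Consequently $\cf[3] \subseteq \mathrm{Stab}_{\pgl}(f)$, and the long exact sequence of homotopy groups for the fibration
$$\mathrm{Stab}(f) \longrightarrow \pgl \longrightarrow \pgl \cdot f \subseteq \x$$
yields a subgroup $T \subseteq \pi_1(\x, f)$ surjecting onto the discrete group $\cf[3] \subseteq \pi_0(\mathrm{Stab}(f))$. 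Using the identification $\mathcal{C}|_{\pgl \cdot f} \cong \pgl \times_{\mathrm{Stab}(f)} \cf$ of the restricted universal cubic, I would check that a loop in $T$ closing via $g \in \cf[3]$ acts on the fiber $\cf$ precisely as the translation $g$. This realizes the ``affine'' part $(\mathbb{F}_3)^2 \subseteq \asl$ of the monodromy on the nine inflection points implicit in the paper's notation.

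Given this, let $s \colon \x \to E_n$ be any continuous section. Since $s(f) \subseteq \cf$ is preserved as a set by every monodromy element, it is preserved by $T$, hence is a $\cf[3]$-invariant subset of $\cf$. But any nonzero $q \in \cf[3]$ acts on $\cf$ without fixed points (the equation $x = x + q$ on a smooth elliptic curve forces $q = 0$), so $\cf[3]$ acts freely on $\cf$, and therefore freely on the finite subset $s(f)$. Thus $s(f)$ decomposes as a disjoint union of $\cf[3]$-orbits, each of size $|\cf[3]| = 9$, and $n = |s(f)|$ is a multiple of $9$.

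The main technical obstacle will be verifying rigorously that the monodromy of a loop in $T$ closing via $g \in \cf[3]$ acts on $\cf$ as the translation $g$ itself, rather than merely as some other diffeomorphism in the same isotopy class. This requires tracing the identifications of the fibers of the universal cubic $\mathcal{C} \to \x$ along a path $g(t) \in \pgl$ from $g(0) = e$ to $g(1) = g$; concretely, $x \in \cf$ at time $0$ is carried to $g \cdot x$ at time $1$ by $\pgl$-equivariance of $\mathcal{C}$. A minor point is that at special $j$-invariants $\mathrm{Stab}(f)$ may be strictly larger than $\cf[3]$, but since $\cf[3] \subseteq \mathrm{Stab}(f)$ holds for all smooth $f$, the relevant translation part of the monodromy is always present.
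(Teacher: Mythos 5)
Your overall strategy --- realize the nine $3$-torsion translations of $\cf$ inside the projective automorphism group $\gf\subset\pgl$, restrict the monodromy to the orbit $\pgl\cdot f$, and deduce $9\mid n$ from a free action of $(\ZZ/3\ZZ)^2$ --- is the same skeleton as the paper's proof (the paper even records your monodromy description of the resulting homomorphism $\gf\to S_n$ in a remark). But there is a genuine gap at the decisive step, and it is not the one you flagged. With the trivialization of the universal curve over a path $p(t)$ from $e$ to $g\in\gf$ given by the $\pgl$-action, the monodromy diffeomorphism of $\cf$ is indeed exactly the translation $g$; that part is fine. The problem is the assertion that $s(f)$ is preserved, \emph{as a subset of $\cf$}, by this translation. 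Transporting the $n$ chosen points along the loop $t\mapsto p(t)\cdot f$ and pulling back by the trivialization produces a path in $\cn$ from $s(f)$ to $g^{-1}\cdot s(f)$, namely $t\mapsto p(t)^{-1}\cdot s(p(t)\cdot f)$; the global continuity of the section makes this path well defined, but it does not force its two endpoints to be equal. The covering-space monodromy is a permutation of the abstract $n$-element set obtained by following points around the loop; it is not the statement that the geometric translation $x\mapsto x+q$ maps the subset $s(f)\subseteq\cf$ to itself. Since a continuous section need not be $\pgl$-equivariant, $s(f)$ need not be invariant under $\gf$ or under the $3$-torsion translations, and your final argument (``a nonzero $q$ acts on $\cf$ without fixed points, hence freely on $s(f)$'') has nothing to act on.

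What is actually needed, and what the paper supplies, is a proof that the \emph{permutation} action of $K\cong(\ZZ/3\ZZ)^2$ on the index set $\{1,\dots,n\}$ is free. This is where the real work lies: one lifts the map $g\mapsto g\cdot s(g^{-1}\cdot f)$ from $\cn$ to the ordered configuration space $\pn$ (possible because $\pi_1(\pgl)\cong\ZZ/3\ZZ$ is torsion while $\pi_1(\pn)$ is torsion-free, configuration spaces being aspherical), defines $\rho:\gf\to S_n$ as the defect of equivariance of that lift, and then shows that if some $\gamma\in K$ fixes an index $i$, the $i$-th coordinate of the lift is a $\gamma$-equivariant map $\pgl\to\cf$; a second lifting argument, to the universal cover $\RR^2$ of $\cf$, then forces $\gamma=\mathrm{id}$ because a finite-order translation of $\RR^2$ is trivial. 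None of this appears in your proposal, and without it the divisibility conclusion does not follow. Your worry about isotopy classes is a red herring by comparison: the $\pgl$-action already gives a preferred representative of the monodromy; the issue is what that representative does to $s(f)$.
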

Let us remark on the significance of Theorem \ref{no section}: it tells us that in those case when $9\nmid n$ and when our current knowledge since Maclaurin had failed to identify any natural structure of $n$ special points on smooth cubic curves, there is in fact none.

Notice that $\xi_1$ is precisely the tautological bundle whose fiber over every curve in $\x$ is the curve itself. Theorem \ref{no section} thus implies:
\begin{cor}\label{no identity}
The tautological bundle $\xi_1$ does not have any continuous section.

\end{cor}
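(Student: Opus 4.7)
The plan is to deduce this directly as the $n=1$ special case of Theorem \ref{no section}. First I would observe that the fiber of $\xi_1$ over $f \in \x$ is $\uconf^{1}\cf$, which is canonically identified with $\cf$ itself, so $\xi_1$ is literally the universal family of smooth cubic plane curves over $\x$ and fits into the family of bundles $\{\xi_n\}$ considered in Theorem \ref{no section}. Since $1$ is not a multiple of $9$, Theorem \ref{no section} immediately yields that $\xi_1$ admits no continuous section.

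It is worth unpacking the interpretive content. A continuous section of $\xi_1$ would amount to a continuous rule assigning to each smooth cubic plane curve $\cf$ a single point lying on $\cf$ as $f$ varies in $\x$. Although every $\cf$ carries nine distinguished inflection points (and further natural collections of $27$, $72$, \ldots\ points), Corollary \ref{no identity} asserts that one cannot continuously pick out even one of them, nor produce a single distinguished point on each cubic by any other continuous construction whatsoever.

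Since Theorem \ref{no section} is doing all of the real work, I do not anticipate any technical obstacle here; the proof is a one-line invocation of the main theorem. The only reason to separate this case out is that it admits the cleanest formulation, namely that the tautological bundle over the moduli space of smooth cubic plane curves fails to have any continuous section.
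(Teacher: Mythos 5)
Your proposal matches the paper exactly: the corollary is deduced as the $n=1$ case of Theorem \ref{no section}, noting that $\uconf^{1}\cf$ is canonically $\cf$ so $\xi_1$ is the tautological bundle, and that $1$ is not a multiple of $9$. No issues.
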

Corollary \ref{no identity} has the following interpretation: it is not possible to continuously choose an elliptic curve structure for every cubic plane curve $\cf$ in the parameter space $\x$, because it is not possible to continuously choose a point on $\cf$ to serve as the identity.

Between the known algebraic constructions (Theorem \ref{previous}) and the topological obstructions presented here (Theorem \ref{no section}), the smallest $n$ for which we don't know whether a continuous section to $\xi_n$ exists or not is $n=18$. 


%

\section*{Acknowledgement}
I am grateful to Benson Farb for sharing his extremely interesting question and conjecture. I thank Igor Dolgachev for pointing me to the papers by Cayley and Gattazzo, and thank Ronno Das for making me aware of a mistake in the proof of Lemma \ref{ab} in an early draft.

\section{Background}\label{set up}
\subsection{The bundle $\xi_n$}
We will first give more details about the fiber bundle $\xi_n$. Let $\pn$ denote the configuration space of $n$ distinct \emph{ordered} points on $\cf$: 
\begin{align*}
&\pn :=\{(x_1,x_2,\cdots, x_n)\in(\cf)^n\ :\  x_i\ne x_j \ \forall i\ne j\}.
\end{align*}
Define
$$\widetilde{E_n}:=\{(f,x_1,x_2,\cdots, x_n)\in \x\times (\PP^2)^n\ :\ x_i\in \cf\  \forall i \text{ and } x_i\ne x_j \ \forall i\ne j\}.$$
The projection $\widetilde{E_n}\to\x$ gives a fiber bundle:
\[
\begin{tikzpicture}
\diagram (m)
{\pn & \widetilde{E_n}\\
& \x\\};
\path [->] (m-1-1) edge node [above] {} (m-1-2)
      (m-1-2) edge node [right] {$\widetilde{\xi}_n$} (m-2-2);
\end{tikzpicture}
\]
The symmetric group $S_n$ acts freely on $\widetilde{E_n}$ by permuting the $n$ coordinates. Define $E_n:=\widetilde{E_n}/S_n$. Since the bundle projection $\widetilde{\xi}_n: \widetilde{E_n}\to \x$ is invariant of $S_n$, it descends to another bundle $\xi_n: E_n\to\x$ as in (\ref{bundle}). The fiber of $\xi_n$ over $f\in\x$ is precisely the \emph{unordered} configuration space of $\cf$
$$\cn:=\pn/S_n.$$

\begin{rmk}
 The Lie group $\pgl = \mathrm{GL}_n(\CC)/\CC^\times$ acts on $\PP^2$ by projective linear maps, and thus acts on $\x$ by projective linear change of coordinates. Hence, $\pgl$ also acts on $\widetilde{E_n}$ diagonally; this action projects to a $\pgl$-action on $E_n$. The bundle projection $\xi_n:E_n\to\x$ is $\pgl$-equivariant. However, a continuous section $s$ of the bundle $\xi_n$ does not have to be $\pgl$-equivariant. The $\pgl$-action on $\x$ will play a key role in the proof of Theorem \ref{no section}.

\end{rmk}

\subsection{Algebraic sections of $\xi_n$} Every smooth cubic plane curve $\cf$ has 
\begin{enumerate}
\item[(a)] 9 inflection points where the tangent line intersect $\cf$ with multiplicity $3$,
\item[(b)] 27 sextatic points where an irreducible conic intersects $\cf$ with multiplicity $6$ (\cite{cayley}),
\item[(c)] 72 points of type 9 where an irreducible cubic intersects $\cf$ with multiplicity $9$ (\cite{g}).
\end{enumerate}
More generally, Gattazzo \cite{g} defined \emph{points of type $3k$} on $\cf$ to be  points where an irreducible curve of degree $k$ intersects $\cf$ with multiplicity $3k$. In particular,  inflection points and sextatic points are precisely points of type 3 and 6, respectively. 
A proof of the following fact can be found on page 392 in \cite{c}: $3k$ points $P_i$ for $i=1,...,3k$ on a cubic curve $C$ are on another curve of degree $k$ if and only if $\sum_{i=1}^{3k} P_i=0$ on $C$ as an elliptic curve with an inflection point as identity.  Therefore, points of type $3k$ on $C$ are precisely the $3k$-torsion points on $C$ that are not $3j$-torsion points for any $j<k$, with an inflection point as identity. This fact was also proved in Corollary 4.3 in \cite{g}. 
 \begin{defn}[Jordan's $2$-totient function]
  $J_2(k)=\text{ number of elements in $(\ZZ/k\ZZ)^2$ of order $k$}.$
 \end{defn}
Jordan's $2$-totient function can be computed using the following formula:
$$J_2(k)= k^2\prod_{p|k, \text{ prime}}(1-\frac{1}{p^2}).$$
It follows from the discussion above that 
$$9 J_2(k) = \text{the number of points of type $3k$ on a cubic curve}$$
The coordinates of the points of type $3k$ on $\cf$ change continuously as we vary $f$ in the parameter space $\x$. For example, the 9 inflections points are precisely the vanishing locus of the Hessian function, which depends continuously on the defining equation of the cubic curve. Therefore, the points of type $3k$ define a continuous section of $\xi_n$ for $n=9 J_2(k)$. More generally, given a finite set of positive integers $S$, the points of type $3k$ for $k\in S$ define a continuous section of $\xi_n$ for $n=9\sum_{k\in S} J_2(k)$, which gives Theorem \ref{previous} in the Introduction.

\section{Proof of Theorem \ref{no section}}
Suppose the bundle $\xi_n$ has a continuous section $s:\x\to E_n$. \\

\noindent\textbf{Step 1: $\pmb{s}$ induces a continuous map $\pmb{\phi: \pgl\to\cn}$}

Throughout the proof, we will fix a basepoint $f\in\x$ to be the Fermat cubic:
$$f(x,y,z)=x^3+y^3+z^3.$$
If we choose a different basepoint in $\x$, the argument will go through with only small modification. See Remark \ref{basepoint} for an explanation. Recall from Section \ref{set up} that $\pgl$ acts on both $E_n$ and $\x$. Define a map 
\begin{align*}
\phi: \pgl &\to E_n\\
g&\mapsto g\cdot s(g^{-1}\cdot f)
\end{align*}
where again $E_n$ is the total space of the fiber bundle $\xi_n$. Notice that for any $g\in\pgl$, we have
\begin{align*}
\xi_n(\phi(g))&=\xi_n(g\cdot s(g^{-1}\cdot f))\\
&= g\cdot\xi_n( s(g^{-1}\cdot f))&\text{since $\xi_n$ is $\pgl$-equivariant}\\
&=g\cdot (g^{-1}\cdot f) &\text{since $\xi_n\circ s=id_\x$}\\
&=f.
\end{align*}
Therefore, the image of $\phi$ is entirely in the fiber of $\xi_n$ over $f$. Hence, we will simply consider $\phi$  as a map  $\phi:\pgl\to \cn$. 

Both the domain and the codomain of $\phi$ are acted upon by a finite group $\gf$,  the group of projective linear automorphisms of the Fermat cubic curve $\cf$: 
$$\gf:=\{g\in\pgl\ :\ g\cdot f=f\in\x\}.$$
$\gf$ acts on $\cn$ since it acts on the curve $\cf$. $\gf$ is a subgroup of $\pgl$ and thus acts on $\pgl$ via multiplication from the left.
\begin{lemma}\label{equi}
The map $\phi:\pgl\to\cn$ is $\gf$-equivariant.
\end{lemma}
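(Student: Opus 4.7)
The plan is to verify $\gf$-equivariance by a direct computation, unraveling the definition of $\phi$ and using the defining property of $\gf$.

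First I would fix $h \in \gf$ and an arbitrary $g \in \pgl$, and I need to check the equivariance identity $\phi(h \cdot g) = h \cdot \phi(g)$, where $\gf$ acts on $\pgl$ by left multiplication and on $\cn$ via its action on $\cf$ (which is the restriction of the $\pgl$-action on $E_n$ to the fiber over $f$).

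The key step is unfolding the definition of $\phi$ on the left-hand side:
\[
\phi(h g) \;=\; (hg) \cdot s\bigl((hg)^{-1} \cdot f\bigr) \;=\; (hg) \cdot s\bigl(g^{-1} h^{-1} \cdot f\bigr).
\]
By definition of $\gf$, we have $h \cdot f = f$ in $\x$, hence also $h^{-1} \cdot f = f$. Substituting this into the argument of $s$ gives
\[
\phi(hg) \;=\; (hg) \cdot s(g^{-1} \cdot f) \;=\; h \cdot \bigl( g \cdot s(g^{-1} \cdot f)\bigr) \;=\; h \cdot \phi(g),
\]
where the middle equality is just associativity of the $\pgl$-action on $E_n$. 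This proves the equivariance.

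There is no real obstacle here; the lemma is essentially a formal consequence of the construction of $\phi$ and the defining property $\gf \cdot f = f$. The only thing to note carefully is that $\phi$ is well-defined as a map into the fiber $\cn$ (already verified in the preceding paragraph of the paper), so the action of $\gf$ on the codomain $\cn \subset E_n$ agrees with the restriction of the $\pgl$-action on $E_n$, and continuity of $\phi$ follows immediately from continuity of $s$ and of the $\pgl$-action.
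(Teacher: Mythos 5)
Your proof is correct and is essentially identical to the paper's: both unfold $\phi(\gamma g)$, use that $\gamma^{-1}\cdot f = f$ for $\gamma\in\gf$ to simplify the argument of $s$, and then regroup by associativity of the $\pgl$-action. The extra remarks on well-definedness and continuity are fine but not needed beyond what the paper already established.
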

\begin{proof}
For any $\gamma\in\ \gf$ and any $g\in\pgl$, we have 
\begin{align*}
\phi(\gamma g) &= (\gamma g)\cdot s(g^{-1}\gamma^{-1}\cdot f)\\
&=\gamma\cdot \Big(g\cdot s(g^{-1}\cdot f)\Big)&\text{since $\gamma^{-1}\in \gf$ fixes $f$}\\
&=\gamma\cdot\phi(g)
\qedhere
\end{align*}
\end{proof}

Observe that $\pi_1(\pgl)\cong\ZZ/3\ZZ$ because we have $\pgl\cong \mathrm{PSL}_3(\CC)$ which is $\mathrm{SL}_3(\CC)$ modulo third roots of unity. On the other hand, Fadell-Neuwirth  (Corollary 2.2 in \cite{fn}) proved that the ordered configuration space $\pn$ (defined in Section \ref{set up}) is aspherical. The same is true for $\cn$. So $\pi_1(\cn)$ must be torsion free because the Eilenberg--MacLane space of any group with torsion must have infinite dimensions. Therefore,  $\phi: \pgl\to\cn$ must induce a trivial map on fundamental groups. 
 By lifting criterion, the map $\phi$ can be lifted to a map $\ft$ making the following diagram commute:
\[
\begin{tikzpicture}
\diagram (m)
{ & \pn\\
\pgl& \cn\\};
\path [->] (m-2-1) edge node [above] {$\ft$} (m-1-2)
edge node [below] {$\phi$} (m-2-2)
      (m-1-2) edge node [right] {$S_n$} (m-2-2);
\end{tikzpicture}
\]
The natural $\gf$-action on $\pn$ commutes with the $S_n$ action. \\

\noindent\textbf{Step 2: $\pmb{\phi}$ induces a group homomorphism $\pmb{\rho: \gf\to S_n}$.}

The lift $\ft$ may not be $\gf$-equivariant, but it projects down to a $\gf$-equivariant map $\phi$ as in Lemma \ref{equi}. Therefore, for any $\gamma\in \gf$, there exists a unique permutation $\s_\gamma\in S_n$ such that $\ft(\gamma)= \s_\gamma(\gamma\cdot \ft(1))$. Let $\rho:\gf\to S_n$ denote the function $\gamma\mapsto \sigma_\gamma$. 
\begin{lemma}
\label{fail}
\begin{enumerate}
\item For any $\gamma\in\gf$ and any $h\in \pgl$, we have 
\begin{equation}
\label{base}
\ft(\gamma h) =\s_\gamma(\gamma\cdot \ft(h)).
\end{equation}
\item The function $\rho: \gf\to S_n$ is a group homomorphism.
\end{enumerate}
\end{lemma}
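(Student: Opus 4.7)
My approach for part (1) is to view it as a unique-lifting statement for the covering map $\pn \to \cn$, whose deck group is $S_n$ acting freely. I would define two continuous maps $\alpha, \beta : \pgl \to \pn$ by $\alpha(h) := \ft(\gamma h)$ and $\beta(h) := \s_\gamma(\gamma\cdot \ft(h))$. Both maps are continuous: $\alpha$ is a composition of $\ft$ with left multiplication by $\gamma$, and $\beta$ uses continuity of $\ft$, of the $\pgl$-action on $\pn$, and the fact that $\s_\gamma \in S_n$ acts by a homeomorphism. Projecting down to $\cn$ via the quotient $S_n$-action, $\alpha$ covers $h \mapsto \phi(\gamma h)$ by the defining relation $\phi = S_n \circ \ft$, while $\beta$ covers $h \mapsto \gamma\cdot \phi(h)$ since the $S_n$-action is trivial on $\cn$. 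By Lemma \ref{equi}, these two downstairs maps coincide.

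Since $\pgl$ is path-connected and $\pn \to \cn$ is a covering, two continuous lifts that agree at a single point agree everywhere. By the very definition of $\s_\gamma$, $\alpha(1) = \ft(\gamma) = \s_\gamma(\gamma\cdot \ft(1)) = \beta(1)$, so $\alpha \equiv \beta$ on all of $\pgl$, which is precisely equation (\ref{base}).

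For part (2), I would apply part (1) twice. Given $\gamma_1,\gamma_2 \in \gf$, the identity (\ref{base}) yields
\[
\ft(\gamma_1 \gamma_2) \;=\; \s_{\gamma_1}\bigl(\gamma_1 \cdot \ft(\gamma_2)\bigr) \;=\; \s_{\gamma_1}\bigl(\gamma_1 \cdot \s_{\gamma_2}(\gamma_2 \cdot \ft(1))\bigr).
\]
Using the fact noted at the end of Step 1 that the $S_n$-action and the $\pgl$-action on $\pn$ commute, I can move $\s_{\gamma_2}$ past $\gamma_1$ to obtain
\[
\ft(\gamma_1 \gamma_2) \;=\; (\s_{\gamma_1}\s_{\gamma_2})\bigl(\gamma_1\gamma_2 \cdot \ft(1)\bigr).
\]
On the other hand, by the definition of $\s_{\gamma_1\gamma_2}$, $\ft(\gamma_1\gamma_2) = \s_{\gamma_1\gamma_2}(\gamma_1\gamma_2 \cdot \ft(1))$. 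Because $S_n$ acts freely on $\pn$, comparing these two expressions forces $\s_{\gamma_1\gamma_2} = \s_{\gamma_1}\s_{\gamma_2}$, so $\rho$ is a homomorphism.

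The main thing to get right is the bookkeeping in part (1): verifying that $\alpha$ and $\beta$ descend to \emph{the same} map $\pgl \to \cn$, which is exactly where the $\gf$-equivariance of $\phi$ from Lemma \ref{equi} enters. Everything else — uniqueness of lifts, freeness of the $S_n$-action, and the commutativity of the $\pgl$- and $S_n$-actions — is structural and was essentially set up in Step 1.
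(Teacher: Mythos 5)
Your proposal is correct and follows essentially the same route as the paper: part (1) is the standard uniqueness-of-lifts argument for the covering $\pn\to\cn$ (the paper phrases it via uniqueness of path lifting along a path from $1$ to $h$, you via the global statement that two lifts agreeing at a point on a connected domain agree everywhere, which is the same argument), with Lemma \ref{equi} ensuring the two maps descend to the same map into $\cn$; part (2) is the identical two-fold application of (1) plus commutativity of the $S_n$- and $\pgl$-actions. Your explicit appeal to freeness of the $S_n$-action to conclude $\s_{\gamma_1\gamma_2}=\s_{\gamma_1}\s_{\gamma_2}$ is a point the paper leaves implicit, and is a welcome clarification.
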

\begin{proof}
(1) Since $\pgl$ is connected, we can take a path $p: [0,1]\to \pgl$ such that $p(0)=1$ and $p(1)=h$. Now   $\ft(\gamma\cdot p(t))$ and $\s_\gamma(\gamma\cdot \ft(p(t)))$ are paths in $\pn$ that both lift the path $\phi(\gamma\cdot p(t))$ in $\cn$, with the same starting point $\ft(\gamma)= \s_\gamma(\gamma\cdot \ft(1))$ when $t=0$. Thus they must end at the same point by the uniqueness of path lifting. 

(2) Take any $\beta,\gamma\in\gf$. On one hand, $$\ft(\beta\gamma) = \s_{\beta\gamma}(\beta\gamma\cdot \ft(1)).$$ On the other hand, by (\ref{base})
\begin{align*}
\ft(\beta\gamma) = \s_{\beta}(\beta\cdot \ft(\gamma)) = \s_\beta\Big(\beta\cdot \s_\gamma\big(\gamma\cdot\ft(1)\big)\Big) = \s_\beta\s_\gamma (\beta\gamma\cdot \ft(1))
\end{align*}
where the last equality follows because the $S_n$-action commutes with the $\gf$-action on $\pn$. 
Therefore, $\s_\beta\s_\gamma=\s_{\beta\gamma}$.
\end{proof}

It follows from Lemma \ref{fail} that the homomorphism $\rho$ is trivial if and only if the lift $\ft$ is $\gf$-equivariant. Therefore, the group homomorphism $\rho:\gf\to S_n$ measures the failure of the lift $\ft$ to be $\gf$-equivariant. 

\begin{lemma}
\label{fixed}
Let $\mathrm{Proj}_i:\pn\to \cf$ denote the projection onto the $i$-th coordinate. Consider the action of $\gf$ on $\{1,2,\cdots,n\}$ given by $\rho:\gf\to S_n$. For any $\gamma\in \gf$ and any $i\in \{1,2,\cdots,n\}$, if $\s_\gamma$ fixes $i$, then the composition $\mathrm{Proj}_i\circ \ft$
$$\pgl\xrightarrow\ft\pn\xrightarrow{\mathrm{Proj}_i} \cf$$
 is equivariant with respect to $\gamma$. 
\end{lemma}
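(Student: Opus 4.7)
The plan is to unpack the definitions and apply Lemma \ref{fail}(1) directly; the statement is essentially a coordinate-wise rewriting of the equivariance failure formula.

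First I would fix $\gamma \in \gf$ and $i \in \{1,\ldots,n\}$ such that $\sigma_\gamma(i)=i$, and take an arbitrary $h \in \pgl$. The goal is to show
$$\mathrm{Proj}_i\bigl(\ft(\gamma h)\bigr) \;=\; \gamma\cdot \mathrm{Proj}_i\bigl(\ft(h)\bigr).$$
By Lemma \ref{fail}(1), the left-hand tuple satisfies $\ft(\gamma h) = \sigma_\gamma\bigl(\gamma\cdot \ft(h)\bigr)$. So projecting to the $i$-th coordinate gives
$$\mathrm{Proj}_i\bigl(\ft(\gamma h)\bigr) \;=\; \mathrm{Proj}_i\bigl(\sigma_\gamma(\gamma\cdot\ft(h))\bigr).$$

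Next I would use the two structural facts about the actions on $\pn$. The $S_n$-action on $\pn$ permutes coordinates, so $\mathrm{Proj}_i\circ\sigma_\gamma = \mathrm{Proj}_{\sigma_\gamma^{-1}(i)}$; since $\sigma_\gamma$ fixes $i$ by hypothesis, this reduces to $\mathrm{Proj}_i$. The $\gf$-action on $\pn$ is diagonal (coming from the action of $\gf \leq \pgl$ on $\cf$), so $\mathrm{Proj}_i(\gamma\cdot x) = \gamma\cdot \mathrm{Proj}_i(x)$ for every $x\in\pn$. Combining these two observations gives
$$\mathrm{Proj}_i\bigl(\sigma_\gamma(\gamma\cdot\ft(h))\bigr) \;=\; \mathrm{Proj}_i\bigl(\gamma\cdot\ft(h)\bigr) \;=\; \gamma\cdot \mathrm{Proj}_i\bigl(\ft(h)\bigr),$$
which is exactly the desired $\gamma$-equivariance of $\mathrm{Proj}_i\circ\ft$.

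There is no real obstacle here: the content of the lemma is entirely packed into Lemma \ref{fail}(1), and the rest is bookkeeping about which symmetric-group element moves which index. The only thing to be mindful of is keeping the two group actions ($S_n$ permuting positions, $\gf$ acting diagonally) straight so that one does not accidentally interchange them — but since they commute (as already noted in the proof of Lemma \ref{fail}(2)), this causes no trouble.
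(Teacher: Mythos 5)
Your proof is correct and follows exactly the paper's argument: apply Lemma \ref{fail}(1), use that $\s_\gamma$ fixes $i$ to drop the permutation under $\mathrm{Proj}_i$, and use that the $\gf$-action on $\pn$ is diagonal to pull $\gamma$ outside. The small ambiguity about whether $\mathrm{Proj}_i\circ\s_\gamma$ equals $\mathrm{Proj}_{\s_\gamma(i)}$ or $\mathrm{Proj}_{\s_\gamma^{-1}(i)}$ is immaterial here since $\s_\gamma$ fixes $i$.
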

\begin{proof}
Given any $h\in\pgl$,  Lemma \ref{fail} gives that $\ft(\gamma\cdot h) =\s_\gamma(\gamma\cdot \ft(h))$. Thus, whenever $\s_\gamma(i)=i$, we have
$$\mathrm{Proj}_i\circ \ft(\gamma\cdot h)=\mathrm{Proj}_i\bigg( \s_\gamma\Big( \gamma\cdot \ft( h)\Big)\bigg) =  \mathrm{Proj}_{i}\Big( \gamma\cdot \ft( h)\Big)=  \gamma\cdot\mathrm{Proj}_{i}\Big( \ft( h)\Big).\qedhere$$
\end{proof}

\begin{rmk}[\textbf{Interpretation of the group homomorphism $\rho$ as monodromy}]
The above construction  of the group homomorphism $\rho$  from a continuous section $s$ appears to be {ad hoc}. We now briefly sketch a more conceptual but less direct construction of $\rho$ to illustrate that $\rho$ is a natural object to consider for finding obstructions to sections. We do not need this equivalent construction of $\rho$ anywhere in the proof. 

A continuous section $s$ produces a cover of $\x$ of degree $n$, where the fiber over each curve is the set of $n$ points on the curve chosen by $s$. The monodromy representation of this cover gives a group homomorphism
$$\pi_1(\x)\to S_n.$$
By restricting the monodromy representation to the $\pgl$-orbit of the Fermat curve
$$\pgl/\gf\approx \pgl\cdot f\hookrightarrow\x$$   
we obtain a group homomorphism 
$$\pi_1(\pgl/\gf)\to S_n.$$
This homomorphism factors through the quotient $\pi_1(\pgl/\gf)\twoheadrightarrow \gf$,
making the following diagram commute:
\[
\begin{tikzpicture}
\diagram (m)
{\pi_1(\pgl/\gf) & S_n\\
\gf & \\};
\path [->] (m-1-1) edge node [above] {} (m-1-2)
              edge node [below] {} (m-2-1)
      (m-2-1) edge [dashed] node [below] {} (m-1-2);
\end{tikzpicture}
\]
The induced homomorphism $\gf\to S_n$ is conjugate to $\rho$.

\end{rmk}

\noindent\textbf{Step 3: $\pmb{n}$ must be a multiple of $\pmb 9$.} The homomorphism $\rho:\gf\to S_n$ gives an action of $\gf$ on $\{1,2,...,n\}$. In the final step, we will show that $\gf$ contains a subgroup $K$ of order 9 that  acts freely. Thus, $\{1,2,...,n\}$ is a disjoint union of orbits of $K$, each of size 9. In particular, $n$ must be a multiple of 9.

\begin{lemma}
\label{ab}
There are two commuting elements $a$ and $b$ in $\gf$ such that
\begin{enumerate}
\item the subgroup $K:=\langle a,b\rangle\le\gf$ is isomorphic to $\ZZ/3\ZZ\times\ZZ/3\ZZ$.
\item $a$ and $b$ act on $\cf$ as translations of order 3.
\end{enumerate}
\end{lemma}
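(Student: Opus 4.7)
The plan is to exhibit explicit elements $a, b \in \Gamma_f$ and verify each condition by direct computation. Fix $\zeta = e^{2\pi i/3}$ and consider the two projective linear maps
\[
a = \bigl[\mathrm{diag}(1, \zeta, \zeta^{2})\bigr], \qquad b = \bigl[(x,y,z) \mapsto (y,z,x)\bigr]
\]
in $\mathrm{PGL}_3(\mathbb{C})$. First I would check that both preserve the Fermat polynomial $f = x^3 + y^3 + z^3$ (obvious: $a$ multiplies each cube by a cube root of unity whose cube is $1$, and $b$ permutes the three cubic summands), so $a, b \in \Gamma_f$.

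Next I would verify the group-theoretic claim (1). A direct computation gives $ab\cdot (x,y,z) = (y, \zeta z, \zeta^2 x)$ and $ba\cdot (x,y,z) = (\zeta y, \zeta^2 z, x)$; these differ by the overall scalar $\zeta$, hence coincide in $\mathrm{PGL}_3(\mathbb{C})$. Both $a$ and $b$ are visibly of order $3$, and $b$ is not diagonal in any basis of eigenvectors of $a$, so $b \notin \langle a\rangle$. Therefore $\langle a, b\rangle$ is an abelian group of exponent $3$ with at least $4$ elements, forcing $\langle a, b\rangle \cong \mathbb{Z}/3\mathbb{Z} \times \mathbb{Z}/3\mathbb{Z}$.

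For (2) I would first show that $a$ and $b$ act freely on $C_f$. Solving $a\cdot[x:y:z] = [x:y:z]$ in $\mathbb{P}^2$ gives the three coordinate points $[1:0:0], [0:1:0], [0:0:1]$, none of which lie on $C_f$. Solving $b\cdot[x:y:z] = [x:y:z]$ gives the three points $[1:\lambda:\lambda^2]$ with $\lambda^3 = 1$; plugging into $f$ yields $1 + \lambda^3 + \lambda^6 = 3 \ne 0$, so none lie on $C_f$ either. Thus both automorphisms act on $C_f$ without fixed points.

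Finally I would invoke the standard structure of automorphisms of an elliptic curve to conclude that fixed-point-free automorphisms are translations. Concretely, choose an inflection point of $C_f$ as origin $O$, so $\mathrm{Aut}(C_f) = C_f \rtimes \mathrm{Aut}(C_f, O)$, and any element has the form $g = t_P \circ \alpha$ with $\alpha$ a group automorphism fixing $O$. If $\alpha \ne \mathrm{id}$, then $\alpha - \mathrm{id}$ is a nonzero isogeny and hence surjective, so $g$ fixes any preimage of $-P$; contrapositively, the fixed-point-free automorphisms $a$ and $b$ are translations, necessarily of order $3$ since they are nontrivial and have order $3$ in $\Gamma_f$. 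I expect the main obstacle to be packaging this last elliptic-curve fact crisply, since everything preceding it is a short explicit computation.
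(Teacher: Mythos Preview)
Your proof is correct and, for part~(1), is essentially the paper's argument with $a$ and $b$ swapped: the same explicit matrices, the same check that the commutator is scalar and hence trivial in $\mathrm{PGL}_3(\mathbb{C})$.

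For part~(2) the two proofs share the same skeleton---both start from the decomposition of an automorphism of an elliptic curve as a translation followed by a group automorphism, and both verify by direct computation that $a$ and $b$ act without fixed points on $C_f$---but they finish differently. The paper applies the Lefschetz fixed-point theorem: since the Lefschetz number vanishes, the induced map $a_*$ on $H_1(C_f;\mathbb{Z})$ has trace~$2$, and a finite-order element of $\mathrm{SL}_2(\mathbb{Z})$ with trace~$2$ is the identity, so the group-automorphism part is trivial. You instead argue directly on the curve: if the group-automorphism part $\alpha$ were nontrivial, then $\alpha-\mathrm{id}$ would be a nonzero endomorphism, hence a surjective isogeny, and any preimage of $-P$ would be a fixed point of $t_P\circ\alpha$. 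Your route is slightly more self-contained (no Lefschetz), while the paper's route makes the connection to the action on homology explicit; both are short and standard.
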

\begin{rmk}
Lemma \ref{ab} was mentioned without proof in \cite{d}, first paragraph of Section 4. We include a brief proof here to make this paper complete and self-contained.
\end{rmk}
\begin{proof}
Let $a$  and $b$ be two elements in $\pgl$ that are represented by the following two matrices:
\[
a=
\begin{bmatrix}
     0 & 0 & 1 \\
    1 & 0 & 0 \\
    0 & 1 & 0 \\    
\end{bmatrix}
,\ \ \  b=
\begin{bmatrix}
     1 & 0 & 1 \\
    0 & e^{2\pi i/3} & 0 \\
    0 & 0 & e^{4\pi i/3} \\    
\end{bmatrix} 
.\]
The commutator of the two matrices is in the center of $\mathrm{GL}_3(\CC)$. Thus, $a$ and $b$  commute in $\pgl$. Both elements are of order 3 and preserve the Fermat cubic $f:x^3+y^3+z^3$. Thus, (1) is established.  

We first claim that $a$ acts on $\cf$ via translation. In general, every morphism of an elliptic curve is a composition of a translation and a group homomorphism. Thus, it suffices to show that the induced map $a_*\in\mathrm{Aut}\big(H_1(\cf,\ZZ)\big) = \mathrm{SL}_2(\ZZ)$
is identity. A straightforward computation shows that the automorphism $a$ on $\cf$ does not have any fixed point. Thus, by the Lefschetz fixed point theorem, $a_*$ must have trace 2. Now $a_*$ is a finite order element in $\mathrm{SL}_2(\ZZ)$ of trace 2, and therefore must be identity. 

The exact same argument applies to $b$. 
\end{proof}

\begin{lemma}
\label{torus}
For any $\gamma\in K=\langle a,b\rangle$, if there is a continuous $\gamma$-equivariant map $\psi:\pgl\to \cf$, then $\gamma$ must be the identity.
\end{lemma}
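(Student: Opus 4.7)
The plan is to exploit the disparity in fundamental groups between the source $\pgl$ and the target $\cf$. Since $\pi_1(\pgl) \cong \ZZ/3\ZZ$ is torsion while $\pi_1(\cf) \cong \ZZ^2$ is torsion-free, the induced homomorphism $\psi_*$ on fundamental groups must vanish. By the lifting criterion, $\psi$ then lifts through the universal cover $p: \CC \to \cf \cong \CC/\Lambda$ to a continuous map $\pt : \pgl \to \CC$.

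Next, I would quantify how $\pt$ interacts with the $\gamma$-action. By Lemma \ref{ab}, $\gamma$ acts on $\cf$ as translation by some $3$-torsion point $t_\gamma \in \cf$; fix any preimage $\tilde{t}_\gamma \in \CC$ of $t_\gamma$ under $p$. The $\gamma$-equivariance of $\psi$ gives $p\bigl(\pt(\gamma h) - \pt(h)\bigr) = t_\gamma$ for every $h \in \pgl$, so the assignment $h \mapsto \pt(\gamma h) - \pt(h) - \tilde{t}_\gamma$ is a continuous function $\pgl \to \Lambda$. Because $\pgl$ is connected and $\Lambda \subset \CC$ is discrete, this function is constant, yielding $\pt(\gamma h) = \pt(h) + \tilde{t}_\gamma + c$ for some fixed $c \in \Lambda$.

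I would then iterate this identity three times. Since every element of $K \cong (\ZZ/3\ZZ)^2$ has order dividing $3$, we have $\gamma^3 = 1$ in $\pgl$, so $\pt(h) = \pt(\gamma^3 h) = \pt(h) + 3(\tilde{t}_\gamma + c)$; this forces $3(\tilde{t}_\gamma + c) = 0$ in $\CC$, hence $\tilde{t}_\gamma + c = 0$. As $c \in \Lambda$, this gives $\tilde{t}_\gamma \in \Lambda$, so $t_\gamma = 0$ in $\cf$ and $\gamma$ acts as the identity on $\cf$. But an element of $\pgl$ that fixes every point of the irreducible cubic $\cf$ (which contains four points in general position) must itself be the identity in $\pgl$, so $\gamma = 1$.

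The main delicate point is the constancy argument for $\pt(\gamma h) - \pt(h) - \tilde{t}_\gamma$, which depends simultaneously on the existence of the lift $\pt$ (guaranteed by the $\pi_1$ disparity) and on the connectedness of $\pgl$ forcing a continuous $\Lambda$-valued function to be constant. Once this is in place, the lifting criterion, the order-$3$ iteration, and the rigidity of $\pgl$ combine routinely to deliver the conclusion.
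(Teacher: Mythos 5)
Your proof is correct and follows essentially the same route as the paper's: lift $\psi$ through the universal cover using the torsion/torsion-free mismatch between $\pi_1(\pgl)$ and $\pi_1(\cf)$, show the lift intertwines $\gamma$ with a translation of the plane, and iterate three times to kill that translation. The only cosmetic difference is that you establish the equivariance identity for the lift via continuity into the discrete lattice $\Lambda$, where the paper uses uniqueness of path lifting; both hinge on the connectedness of $\pgl$.
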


\begin{proof}
Note that $\psi: \pgl\to\cf$ must induce a  trivial map on fundamental groups since $\pi_1(\pgl)\cong\ZZ/3\ZZ$ and $\pi_1(\cf)\cong\ZZ^2$. Thus we can lift $\psi$ to $\pt$ making the following diagram commute:
\[
\begin{tikzpicture}
\diagram (m)
{ & \RR^2\\
\pgl& \cf\\};
\path [->] (m-2-1) edge node [above] {$\pt$} (m-1-2)
edge node [below] {$\psi$} (m-2-2)
      (m-1-2) edge node [right] {$p$} (m-2-2);
\end{tikzpicture}
\]
$\gamma\in K$ is a translation of the torus $\cf$. Let $\gt:\RR^2\to\RR^2$ be the unique translation of $\RR^2$ such that
$$\gt(\pt(1))=\pt(\gamma).$$
We claim that for any $h\in \pgl$, 
\begin{equation}
\label{equi lift}
\gt(\pt(h))=\pt(\gamma h).
\end{equation}
Take a path $\mu(t)$ in $\pgl$ such that $\mu(0)=1$ and $\mu(1)=h$. The two paths $\gt(\pt(\mu(t))$ and $\pt(\gamma\cdot\mu(t))$ are both lifts of the path $\gamma(\psi(\mu(t)))=\psi(\gamma\cdot \mu(t))$ starting at the same point $\gt(\pt(1))=\pt(\gamma)$. Thus they must end at the same point by the uniqueness of path lifting, giving that
$\gt(\pt(h))=\pt(\gamma h).$

Applying (\ref{equi lift}) three times, we have
$$\gt^3(\pt(1))=\pt(\gamma^3)=\pt(1).$$
Thus, $\gt^3$ must be the trivial translation of $\RR^2$, and so is $\gt$. In this case (\ref{equi lift}) gives that $\pt(1)=\pt(\gamma)$, and thus $\psi(1)=\psi(\gamma)=\gamma\cdot \psi(1)$. $\gamma$ is a translation of the torus $\cf$ that has a fixed point $\psi(1)$, and therefore must be the identity map.
\end{proof}

Recall that the group homomorphism $\rho:\gf\to S_n$ gives an action of $\gf$ on $\{1,2,\cdots, n\}$. Now Lemma \ref{fixed} and Lemma \ref{torus} together imply that the subgroup $K\le \gf$ acts freely on $\{1,2,\cdots, n\}$. Thus, $\{1,2,\cdots, n\}$ decomposes to a disjoint union of $K$-orbits of size 9. In particular, $n$ must be a multiple of 9. This concludes the proof of Theorem \ref{no section}.

\begin{rmk}[\textbf{What if we choose a different basepoint?}]
\label{basepoint}
The choice of the Fermat cubic curve $f(x,y,z)=x^3+y^3+z^3$ as a basepoint of $\x$ allows us to explicitly write down matrices for $a$ and $b$ in Lemma \ref{ab}.  If a different basepoint $h\in\x$ is chosen, the argument will go through with only minor modification. 
There is an element $g\in\pgl$ that brings $h$ to be in the \emph{Hesse form}: $x^3+y^3+z^3+\lambda xyz$ for some $\lambda\in\CC$ (see e.g. Lemma 1 in \cite{d}). One can check that $a$ and $b$ constructed in Lemma \ref{ab}  act as translations on any smooth cubic curve in the Hesse form. Thus $gag^{-1}$ and $gbg^{-1}$ act on $C_h$ as translations. Now the argument goes through by replacing $a$ and $b$ by their $g$-conjugates. 
\end{rmk}

\end{document}